  \renewcommand\appendix{\par
  \setcounter{section}{0}
  \setcounter{subsection}{0}
  \setcounter{figure}{0}
  \setcounter{table}{0}
  \renewcommand\thesection{ Appendix \Alph{section}}
  \renewcommand\thefigure{\Alph{section}\arabic{figure}}
  \renewcommand\thetable{\Alph{section}\arabic{table}}
}
\tikzstyle{mybox} = [draw=black, fill=white,  thick,
\tikzstyle{mybox} = [draw=black, fill=white,  thick,
\newtheorem{definition}{Definition}[section]
\newtheorem{lemma}{Lemma}[section]
\newtheorem{theorem}{Theorem}[section]
\newtheorem{conjecture}{Conjecture}[section]
\newtheorem{remark}{Remark}[section]
\begin{document}

\title{On the chromatic number of almost stable  general Kneser hypergraphs}
\author{Amir Jafari}
\maketitle

\begin{abstract}

 Let $n\ge 1$ and $s\ge 1$ be integers.  An almost $s$-stable subset $A$ of $[n]=\{1,\dots,n\}$ is a subset such that for any two distinct elements $i, j\in A$, one has $|i-j|\ge s$. For a family $\cal F$ of non-empty subsets of $[n]$ and an integer $r\ge 2$, the chromatic number of the $r$-uniform Kneser hypergraph $\mbox{KG}^r({\cal F})$, whose vertex set is  $\cal F$ and whose edge set is the set of $\{A_1,\dots, A_r\}$ of pairwise disjoint elements in $\cal F$, has been studied extensively in the literature and Abyazi Sani and Alishahi were able to give a lower bound for it in terms of the equatable $r$-colorability defect, $\mbox{ecd}^r({\cal F})$. In this article, the methods of Chen for the special family of all $k$-subsets of $[n]$, are modified to give lower bounds for the chromatic number of almost stable general Kneser hypergraph $\mbox{KG}^r({\cal F}_s)$ in terms of $\mbox{ecd}^s({\cal F})$. Here ${\cal F}_s$ is the collection of almost $s$-stable elements of $\cal F$. We also propose a generalization of a conjecture of Meunier.
 \end{abstract}

\section{Introduction}

Let $n\ge 1$, $s\ge 1$ and $r\ge 2$ be integers. Let $\cal F$ be a family of non-empty subsets of $[n]=\{1,\dots, n\}$. We say that a subset $A$ of $[n]$ is $s$-stable if for all distinct elements $i$ and $j$ in $A$, one has
$$s\le |i-j|\le n-s.$$
If we only demand $|i-j|\ge s$, then $A$ is said to be almost $s$-stable. We use the notation ${\cal F}_s$ for the almost $s$-stable subsets in $\cal F$. The $r$-uniform Kneser hypergraph $\mbox{KG}^r({\cal F}_s)$ is an $r$-uniform hypergraph whose vertex set is ${\cal F}_{s}$ and whose edge is the set of all pairwise disjoint subsets $\{A_1,\dots, A_r\}$ in ${\cal F}_s$.
We use the notion of the equitable $r$-colorability defect of $\cal F$, defined by Abyazi Sani and Alishahi in \cite{AA}. It is defined as follows.
\begin{definition}
The $r$-colorability defect of a family of non-empty subsets $\cal F$ of $[n]$ is defined to be the minimum size of a subset $X_0\subseteq [n]$ such that there is an equitable partition 
$$[n]\backslash X_0=X_1\cup\dots\cup X_r$$
so that there are no $F\in {\cal F}$ and $1\le i\le r$ such that $F\subseteq X_i$. Here equitable means that $||X_i|-|X_j||\le 1$ for all $1\le i\le j\le r$.
\end{definition}
Our goal here is to prove the following two theorems.
\begin{theorem}\label{thm1}
If $r$ is a power of $2$ and $s$ is a multiple of $r$, then 
\[\chi(\mbox{KG}^r({\cal F}_s))\ge \left\lceil \frac{\mbox{ecd}^s({\cal F})}{r-1}\right\rceil.\]
\end{theorem}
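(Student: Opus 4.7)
I would prove the theorem by contradiction using an equivariant topological argument in the spirit of Chen's proof of the stable Kneser conjecture. Suppose $c:\mathcal{F}_s\to[m]$ is a proper coloring of $\mbox{KG}^r(\mathcal{F}_s)$ with $(r-1)m\le \mbox{ecd}^s(\mathcal{F})-1$; set $d=\mbox{ecd}^s(\mathcal{F})$. The goal is to derive a contradiction. Because $r=2^t$, the elementary abelian group $G=(\mathbb{Z}_2)^t$ has order $r$ and acts freely on spheres, so the $(\mathbb{Z}_2)^t$-version of the Borsuk--Ulam / Tucker lemma is at our disposal; this is why the hypothesis ``$r$ is a power of $2$'' appears, and it plays the role that the $\mathbb{Z}_p$-Tucker lemma played in the work of Abyazi Sani and Alishahi.

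First I would set up a highly connected $G$-space $T$ whose points encode $r$-tuples $(A_g)_{g\in G}$ of pairwise disjoint subsets of $[n]$, in the spirit of the deleted $r$-fold join: after fixing a bijection between $G$ and the $r$ nonzero signs, take the signed-set complex on $(G\cup\{0\})^{[n]}$, with $G$ acting on $x$ by multiplication on its nonzero coordinates. Next, using that $s$ is a multiple of $r$, I would partition the support positions of $[n]$ into consecutive blocks of length $s$, each subdivided internally into $r$ sub-blocks of length $s/r$; the $G$-action then cyclically permutes sub-blocks within each block. From every signed vector $x$ and every $g\in G$ I would extract a canonical almost $s$-stable refinement $A_g'(x)\subseteq A_g(x)$ by a $G$-equivariant rule (for instance, within each $s$-block keep only the element, if any, that lies in the sub-block labelled $g$). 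The divisibility $r\mid s$ is precisely what makes this selection commute with the $G$-action, and the block structure forces $|i-j|\ge s$ for distinct $i,j\in A_g'(x)$, so $A_g'(x)$ is indeed almost $s$-stable.

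With $c$ and a canonically chosen sign of $x$, I would then build a $G$-equivariant simplicial map $\lambda:T\to G\ast[m]$: when some $A_g'(x)\in\mathcal{F}_s$, set $\lambda(x)=(g,c(A_g'(x)))$ and propagate equivariantly. The case where \emph{no} $A_g'(x)$ belongs to $\mathcal{F}$ must be translated, via the structure of $x$, into an equitable partition of $[n]\setminus X_0$ into $s$ parts containing no $F\in\mathcal{F}$ with $|X_0|\le d-1$, directly contradicting the definition $d=\mbox{ecd}^s(\mathcal{F})$. Applying the $(\mathbb{Z}_2)^t$-Tucker / Borsuk--Ulam theorem to $\lambda$ then forces a coincidence that exhibits two disjoint almost $s$-stable sets $A_{g_1}'(x),A_{g_2}'(x)$ sharing the same color, contradicting the properness of $c$ as an $r$-uniform Kneser coloring.

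\textbf{Main obstacle.} The delicate step is the $G$-equivariant construction of $A_g'(x)$: it must simultaneously respect the distance condition $|i-j|\ge s$ and commute with the $G$-action, and it is exactly the combination of $r=2^t$ (supplying the Borsuk--Ulam framework on $(\mathbb{Z}_2)^t$) together with $r\mid s$ (allowing the internal $r$-fold subdivision of each $s$-block) that makes this possible. Equally subtle is converting the ``bad'' case $A_g'(x)\notin\mathcal{F}$ quantitatively into a violation of the extremal problem defining $\mbox{ecd}^s(\mathcal{F})$; the $(r-1)$ in the denominator of $\lceil \mbox{ecd}^s(\mathcal{F})/(r-1)\rceil$ reflects the codimension of the test space $G\ast[m]$ against the connectivity of $T$, and matching these two values exactly is the heart of the argument.
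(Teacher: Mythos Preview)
Your plan takes a route quite different from the paper's, and as written it has two genuine gaps. First, the equivariant refinement does not produce almost $s$-stable sets. If the sub-block labelled $g$ occupies the same relative positions (of width $s/r$) inside every $s$-block, then the rightmost element of that sub-block in one $s$-block and the leftmost element of the same sub-block in the next $s$-block are only $s-s/r+1$ apart, which is $<s$ whenever $r\ge 2$; so ``the block structure forces $|i-j|\ge s$'' is false except in the degenerate case $s=r$. Moreover, you let $G=(\mathbb Z_2)^t$ ``cyclically permute'' the $r$ sub-blocks, but for $t\ge 2$ this group is not cyclic, so the action you describe is not defined; any free $(\mathbb Z_2)^t$-action on the sub-blocks is the regular action, which does not respect the linear order needed for the distance argument. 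Second, the topological input is not available in the form you need: there is no off-the-shelf ``$(\mathbb Z_2)^t$-Tucker lemma'' that outputs the factor $r-1=2^t-1$ the way the $\mathbb Z_p$-Tucker lemma outputs $p-1$ for prime $p$; Borsuk--Ulam type bounds for elementary abelian $2$-groups do exist, but matching their index/connectivity estimates to the quantity $\mbox{ecd}^s(\mathcal F)/(r-1)$ is precisely the open problem. Finally, the ``bad case'' (no $A_g'(x)$ lies in $\mathcal F$) is where all the work with $\mbox{ecd}^s$ happens, and your plan leaves it entirely unspecified.

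The paper avoids these obstacles by never working $(\mathbb Z_2)^t$-equivariantly. It first proves the case $r=2$ (with $s$ even) via the ordinary $\mathbb Z_2$-Tucker lemma: to a signed set $A$ it assigns its \emph{largest alternating subsequence} ${\rm Alt}(A)$; when $|{\rm Alt}(A)|>n-\mbox{ecd}^s(\mathcal F)$ one splits ${\rm Alt}(A)$ into its $s$ residue classes modulo $s$, which are automatically almost $s$-stable and equitable, so some $F\in\mathcal F$ sits inside one of them and one can use the colour $c(F)$; otherwise one uses $\pm|{\rm Alt}(A)|$. The extension to $r=2^t$ is then a purely combinatorial induction: a short lemma shows that if the bound holds for $(r_1,r_1)$ and for $(r_2,s_2)$ then it holds for $(r_1r_2,r_1s_2)$, by introducing the auxiliary family $\mathcal F'=\{X\subseteq[n]:\mbox{ecd}^{s_2}(\mathcal F|_X)>(r_2-1)t\}$ and colouring $\mbox{KG}^{r_1}(\mathcal F'_{r_1})$. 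Iterating with $r_1=r_2=2$ reaches every power of $2$. So the hypothesis ``$r$ a power of $2$'' enters through repeated doubling from the $r=2$ base case, not through an equivariant argument for the whole group at once.
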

It is plausible to make the following conjecture.
\begin{conjecture}\label{conj}
For any $n\ge 1$, $r\ge 2$, $s\ge r$ and any family $\cal F$ of subsets of $[n]$, one has
\[\chi(\mbox{KG}^r({\cal F}_s))\ge \left\lceil \frac{\mbox{ecd}^s({\cal F})}{r-1}\right\rceil.\]
\end{conjecture}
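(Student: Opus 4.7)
The plan is to extend the topological strategy behind Theorem \ref{thm1} to arbitrary $r \geq 2$ with $s \geq r$. I would argue by contradiction: assume a proper coloring $c: {\cal F}_s \to [C]$ exists with $(r-1)C < \mbox{ecd}^s({\cal F})$, and produce a topological obstruction. The first step is to construct a $\mathbb{Z}_r$-equivariant space $\Omega$ whose points parameterize pairs consisting of an almost $s$-stable subset of $[n]$ together with a $\mathbb{Z}_r$-labeling of its elements; a natural candidate is a deleted $r$-fold join of a simplicial complex whose vertex set is $[n]$ and whose maximal faces correspond to almost $s$-stable elements of $\cal F$. Using the definition of $\mbox{ecd}^s({\cal F})$, together with the hypothesis $s \geq r$, one should show that $\Omega$ is at least $(\mbox{ecd}^s({\cal F}) - 2)$-connected; the role of $s \geq r$ is to provide enough ``spacing'' on $[n]$ to accommodate $r$ pairwise disjoint almost $s$-stable sets inside the deleted join.

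Next I would convert the coloring $c$ into a continuous $\mathbb{Z}_r$-equivariant test map
\[ F: \Omega \longrightarrow W_r^{\oplus C}, \]
where $W_r = \{(t_1,\dots,t_r) \in \mathbb{R}^r : \sum t_i = 0\}$ is the standard $(r-1)$-dimensional free $\mathbb{Z}_r$-representation. Properness of $c$ should ensure $F$ avoids the origin, so $F$ factors through $W_r^{\oplus C}\setminus\{0\}\simeq S^{(r-1)C-1}$. For $r$ prime, Dold's theorem applied to the free $\mathbb{Z}_r$-action on $\Omega$ then yields $(r-1)C \geq \mbox{conn}(\Omega) + 2 \geq \mbox{ecd}^s({\cal F})$, contradicting the initial assumption. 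For $r$ a prime power, an iterated version of this argument along the lines of Chen's Theorem \ref{thm1} (with $\mathbb{Z}_p^k$ replacing $\mathbb{Z}_2^k$) should go through similarly.

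The main obstacle is the case of composite $r$ with several distinct prime divisors, such as $r=6$. The index-theoretic machinery underlying both Theorem \ref{thm1} and the sketch above breaks down outside of prime powers: Dold's theorem and the topological Tverberg phenomenon both fail there, so the $\mathbb{Z}_r$-index of $\Omega$ can be strictly smaller than its connectivity and the test-map method collapses. Overcoming this likely requires either a combinatorial reduction from general $r$ to its prime-power factors — for instance by converting a proper coloring witnessing the composite case into a proper coloring witnessing a prime-power subproblem — or a genuinely combinatorial argument in the spirit of the generalization of Meunier's conjecture mentioned in the abstract, which would sidestep equivariant topology altogether.
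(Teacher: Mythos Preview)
The statement you are trying to prove is labelled \emph{Conjecture} in the paper, and the paper does not prove it.  What the paper establishes are the two partial results Theorem~\ref{thm1} (the case $r=2^k$, $s$ a multiple of $r$, via Tucker's lemma plus an inductive doubling lemma) and Theorem~\ref{thm2} (the case $r=p$ prime, but with a \emph{weaker} lower bound than the conjectured one).  So there is no ``paper's own proof'' to compare against; any complete argument here would be new mathematics.

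Your sketch has a concrete gap already in the prime case, well before the composite-$r$ obstacle you flag.  The assertion that the deleted $r$-fold join $\Omega$ built from almost $s$-stable members of $\mathcal F$ is $(\mbox{ecd}^s({\mathcal F})-2)$-connected is doing all of the work, and you give no mechanism for it beyond ``using the definition of $\mbox{ecd}^s(\mathcal F)$ together with $s\ge r$''.  For a general family $\mathcal F$ there is no known direct link between equitable colorability defects and the connectivity of such a join; the paper's own $r=2$ argument sidesteps connectivity entirely and instead builds an explicit Tucker map via the combinatorics of $\mbox{Alt}(A)$.  A good sanity check is that if your Dold-theorem step genuinely went through for every prime $r$, it would already improve Theorem~\ref{thm2} to the full conjectured bound in the prime case, which the paper does not achieve.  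So either the connectivity estimate fails in general, or you have found something the authors missed; in either event this step needs a real argument, not an appeal to the definition.

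Your diagnosis of the composite-$r$ difficulty (failure of Dold/Tverberg outside prime powers) is accurate and matches the state of the art, but it is a second obstacle layered on top of the unjustified connectivity claim, not the only one.
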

\begin{remark} This conjecture for the special family of all $k$-subsets of $\{1,\dots, n\}$ was made by Meunier in \cite{M}. A version of this conjecture with the topological $r$-colorability defect was made for a general family and the $s$-stable part of the family by Frick in \cite{F}. 
\end{remark}
\noindent
We also prove the following theorem.
\begin{theorem}\label{thm2}
If $r=p$ is a prime number and $s\ge 2$ is an integer, then 

\[\chi(\mbox{KG}^p({\cal F}_{s}))\ge \left\lceil \frac{n-\alpha_1-\alpha_2}{p-1}\right\rceil\]
where $\alpha_1=(s-1)\left\lfloor \frac{n-\mbox{ecd}^p({\cal F})}{p}\right\rfloor$ and $\alpha_2=\left\lfloor (p-1)\frac{n-\mbox{ecd}^p({\cal F})+1}{p}\right\rfloor$.
\end{theorem}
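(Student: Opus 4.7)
The plan is to apply a $\mathbb{Z}_p$-topological index (Dold-type) argument, adapting Chen's method from the family of all $k$-subsets to a general $\mathcal{F}$, with $\mbox{ecd}^p(\mathcal{F})$ playing the role of the $p$-colorability defect. I would assume for contradiction that $C := \chi(\mbox{KG}^p(\mathcal{F}_s))$ satisfies $(p-1)C < n - \alpha_1 - \alpha_2$, fix a proper coloring $c : \mathcal{F}_s \to [C]$, and set $M := (p-1)C + 1$, so that $M \le n - \alpha_1 - \alpha_2$. The goal is to exhibit a $\mathbb{Z}_p$-equivariant map $\Phi : (\mathbb{Z}_p)^{*M} \to S(V^{\oplus C})$, where $V$ denotes the reduced regular $(p-1)$-dimensional real $\mathbb{Z}_p$-representation. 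Such a map is forbidden by Dold's theorem, because the $\mathbb{Z}_p$-index of the source is $M-1 = (p-1)C$ while that of the target is only $(p-1)C - 1$.

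To construct $\Phi$, I would encode each $x \in (\mathbb{Z}_p)^{*M}$ as a weighted ordered $p$-tuple $(A_1, \dots, A_p)$ of pairwise disjoint subsets of $[M]$ with barycentric weights $t_i \in [0,1]$ summing to $1$. The combinatorial core of the argument is the design of an encoding $[M] \hookrightarrow [n]$ that lifts every ordered $p$-partition $(A_1, \dots, A_p)$ of $[M]$ to a $p$-tuple $(B_1, \dots, B_p)$ of pairwise disjoint \emph{almost $s$-stable} subsets of $[n]$ whose union omits at most $\alpha_1 + \alpha_2$ elements. Starting from an equitable $p$-partition of $[n]\setminus X_0^{*}$ realizing $D := \mbox{ecd}^p(\mathcal{F})$ (with $|X_0^{*}|=D$ and each part of size $\lfloor(n-D)/p\rfloor$ or $\lceil(n-D)/p\rceil$), I would insert $(s-1)$-wide gutters between consecutive members of each $\mathbb{Z}_p$-residue class --- contributing $\alpha_1 = (s-1)\lfloor(n-D)/p\rfloor$ --- and absorb the equitable-rounding slack into a pool of at most $\alpha_2 = \lfloor(p-1)(n-D+1)/p\rfloor$ positions. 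Setting $X_0 := [n]\setminus\bigcup_i B_i$, this setup arranges $|X_0| < D$ with $(B_1, \dots, B_p)$ an equitable partition of $[n]\setminus X_0$.

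At this point the defining property of $\mbox{ecd}^p(\mathcal{F})$ applies: since $|X_0| < D$, some $B_i$ must contain a member $F$ of $\mathcal{F}$, and since $B_i$ is almost $s$-stable so is $F$, whence $F \in \mathcal{F}_s$. The color $c(F)$, together with the $\mathbb{Z}_p$-coordinate $i$ (read as a $p$-th root of unity $\omega^i$) and the weight $t_i$ carried by the part $A_i$, is used to populate the $c(F)$-th $V$-block of $\Phi(x) \in V^{\oplus C}$. A standard maximization over all valid pairs $(i, F)$ --- combined with the properness of $c$, which ensures that no $p$ pairwise disjoint members of $\mathcal{F}_s$ share a color and hence no cancellation occurs --- produces a continuous $\mathbb{Z}_p$-equivariant map into $V^{\oplus C}\setminus\{0\}$, which normalizes to the desired map to $S(V^{\oplus C})$.

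The hard part will be the combinatorial engineering of the embedding $[M] \hookrightarrow [n]$: one needs \emph{every} ordered $p$-partition of $[M]$ (not merely those respecting the natural residue structure) to yield almost $s$-stable $B_i$'s forming an equitable partition of $[n]\setminus X_0$ with $|X_0| < \mbox{ecd}^p(\mathcal{F})$, and the exact constants $\alpha_1$ and $\alpha_2$ should arise from optimizing this packing. Verifying continuity of $\Phi$ at the degenerate points of $(\mathbb{Z}_p)^{*M}$ (where some $A_i$ are empty or some $t_i$ vanish) will require the standard partition-of-unity / maximum-weight argument in the Ziegler--Meunier style, and tracking the interaction of these degenerations with the $(s-1)$-gutters is where the most delicate bookkeeping is likely to occur.
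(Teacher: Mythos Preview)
Your proposal has a genuine gap at its combinatorial core. You ask for a \emph{fixed} injection $\iota:[M]\hookrightarrow[n]$ such that for \emph{every} ordered $p$-partition $(A_1,\dots,A_p)$ of $[M]$ the images $B_i=\iota(A_i)$ are almost $s$-stable. Taking $A_1=\{j,j+1\}$ shows that any two consecutive points of $\iota([M])$ must differ by at least $s$, so $n\ge s(M-1)+1$; with $M=n-\alpha_1-\alpha_2$ this forces $\alpha_1+\alpha_2\ge (s-1)(n-1)/s$, which is false as soon as $\mbox{ecd}^p(\mathcal F)$ is not tiny. Allowing the lift to depend on the partition destroys the continuity/equivariance you need on $(\mathbb Z_p)^{*M}$. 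Your reading of $\alpha_1$ and $\alpha_2$ as ``gutter cost'' and ``rounding slack'' is also off: these constants do not arise from packing $[M]$ into $[n]$.

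The paper's argument runs in the opposite direction. It works on $sd(E_{n-1}(\mathbb Z_p))$ (so the ground set is the full $[n]$, not a compressed $[M]$) and applies the $\mathbb Z_p$-Tucker lemma. For a face $A$ one takes the \emph{maximal} $B\subseteq A$ with every $B^i$ almost $s$-stable, and then splits into three cases. If some $B^i$ contains an $F\in\mathcal F$ one uses the color $c(F)$ (the ``high'' values $>\alpha$). Otherwise the map outputs a ``low'' value in $[\alpha]$ built from the size profile $(|B^{i_1}|,\dots,|B^{i_p}|)$: when all $|B^i|$ are equal the common value is at most $\lfloor (n-\mbox{ecd}^p(\mathcal F))/p\rfloor$, which is exactly what makes $\alpha_1$ appear; when they are unequal the minimum size together with its multiplicity $h$ gives a value bounded via $\alpha_2$. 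The Tucker conditions then reduce to showing these low values cannot stay fixed along an inclusion $A_1\subseteq A_2$ unless the $\mathbb Z_p$-labels agree, which follows from the maximality of $B$. So the route to $\alpha_1,\alpha_2$ is through bounding sizes of maximal $s$-stable sub-faces, not through an embedding of a smaller index set; your plan would need to be reworked along these lines.
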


\begin{remark}
If $p=2$ and $\cal F$ is the family of all $k$-subsets of $[n]$, then $\mbox{ecd}^2({\cal F})=n-2(k-1)$, and hence $\alpha_1=(s-1)(k-1)$ and $\alpha_2=k-1$. It follows that 
$$\chi(\mbox{KG}^2({\cal F}_s))\ge n-s(k-1)=\mbox{ecd}^s({\cal F}).$$
This gives a confirmation of the conjecture \ref{conj} for $r=2$ and the family of all $k$-subsets of $[n]$. This was proved by Chen in \cite{C2}. Also it is worthwhile to note that if $n\ge sk$, then coloring each almost $s$-stable $k$-subset with the value of its minimum element gives a proper coloring of $\mbox{KG}^2({\cal F}_s)$ with $n-s(k-1)$ colors. So, in fact the above inequality is an equality. 
\end{remark}

\section{Proof of Theorem \ref{thm1}}

The proof is done in two steps. First, we prove the theorem for $r=2$. Next, we prove that the statement of the theorem is true for all powers of $2$ by induction.
The first part of the proof is via a clever application of the Tucker lemma, which has its roots in the works of Meunier \cite{M} and Chen \cite{C1}. For $n\ge 1$, we let $sd(E_{n-1}(\mathbb Z_2))$ denote all non-empty subsets of $\{\pm 1,\dots, \pm n\}$ such that no two distinct elements of it, have the same absolute value.

\begin{proof} 
We will assume that $r=2$, $t=\chi(\mbox{KG}^2({\cal F}_s))$, with a proper coloring $c$ of its vertices with $\{1,\dots, t\}$. Let $\alpha=n-\mbox{ecd}^s({\cal F})$. Also, fix an arbitrary complete ordering on the subsets of $[n]$, such that if $|A|<|B|$ then $A<B$.
\\
We define a $\mathbb Z_2$-equivariant map
$$\lambda: sd(E_{n-1}(\mathbb Z_2)) \longrightarrow \{\pm 1,\dots, \pm m\}$$
where $m=\alpha+t$. For a non-empty face $A =\{a_1, a_2, \ldots , a_k\}$ with $|a_1| <|a_2| < \ldots < |a_k| $, we define ${\rm sgn (A)}$ equal to the sign of $a_1$. Also, we put $||A||=\{|a_1| , |a_2| , \ldots , |a_k| \}$. By an alternating subset of $A$, we mean a non-empty subset $\{a_{i_1} , a_{i_2} , \ldots , a_{i_l} \}\subseteq A$ with $|a_{i_1}| <|a_{i_2}| < \cdots < |a_{i_l}|$ in such a way that $a_{i_t}.a_{i_{t+1}} <0$ for each $1\leq t < l$. We define ${\rm Alt}(A)$ equal to the unique alternating subset $B\subseteq A$, such that for any other alternating subset $B' \subseteq A$, the relation $||B'||< ||B||$ holds. In other words, ${\rm Alt}(A)$ is the largest alternating subset of $A$ with respect to the complete ordering. 

Let $A =\{a_1, a_2, \ldots , a_k\}$ and ${\rm Alt}(A)= \{a_{i_1} , a_{i_2} , \ldots , a_{i_l} \}$. One may easily observe that $a_{1}.a_{i_1} >0$; since otherwise, $|a_{1}| < |a_{i_1}| $ and because of $a_{1}.a_{i_1} <0$, we obtain a larger alternating subset  $\{a_{1}, a_{i_1} , a_{i_2} , \ldots , a_{i_l} \}\subseteq A$; a contradiction.
Therefore, ${\rm sgn (A)}$ is exactly the sign of $a_{i_1}$.

\noindent Now, the definition of $\lambda (A)$ is given in two cases.

\noindent {\textbf{Case 1:}} If $|{\rm Alt}(A)| \leq \alpha$, then define $\lambda (A) = {\rm sgn}(A)|{\rm Alt}(A)|$.	 

\noindent {\textbf{Case 2:}} If $|\mbox{Alt}(A)|>\alpha$, then consider $\mbox{Alt}(A)=\{a_{i_1} , a_{i_2} , \ldots , a_{i_l} \}$ and define $$X_i=\{|a_{i_j}|\: : \: j\equiv i \mod s\}$$ for $i=1,\dots, s$. Then we have a family of $s$ equitable disjoint subsets such that $\sum_{i=1}^s |X_i|>n-\mbox{ecd}^s({\cal F})$. So, by the definition of equitable $s$-colorability defect, there are some $F\in {\cal F}$ and $1\le i\le s$, such that $F\subseteq X_i$. Choose the largest such $F$, using the chosen complete ordering. From the definition of $X_i$'s, it is clear that $X_i$'s are almost $s$-stable and hence $F$ is also almost $s$-stable, also since $s$ is even, the elements of $X_i$ are either inside $A^{+}$ or $A^{-}$, where $A^{\pm}=\{1\le i\le n | \pm i\in A\}$. Define $\lambda(A)=c(F)+\alpha$ if $F\subseteq A^+$ and $\lambda(A)=-(c(F)+\alpha)$ if $F\subseteq A^-$.

We now show that if $A =\{a_1, a_2, \ldots , a_k\}\subseteq B=\{b_1, b_2, \ldots , b_{k'}\}$ and $|\lambda(A)|=|\lambda(B)|$ then $\lambda(A)=\lambda(B)$. So, the Tucker lemma will imply that $m\ge n$ that is $t\ge \mbox{ecd}^s({\cal F})$ as desired. To prove this claim, we consider two parts: 

\noindent {\textbf{Part 1:}}
If $|\lambda (A)|=|\lambda (B)|\leq \alpha$, we are in the first case and hence $|{\rm Alt}(A)|=|{\rm Alt}(B)|$. Again, put $\mbox{Alt}(A)=\{a_{i_1} , a_{i_2} , \ldots , a_{i_l} \}$ and ${\rm Alt} (B)=\{b_{j_1},b_{j_2},\ldots ,b_{j_l}\}$ with  $|a_{i_1}| <|a_{i_2}| < \cdots < |a_{i_l}|$ and $|b_{j_1}|<|b_{j_2}|<\cdots <|b_{j_l}| $.
Since ${\rm sgn} (A) = {\rm sgn} (a_{i_1})$ and ${\rm sgn} (B) = {\rm sgn} (b_{1})$, if
${\rm sgn} (A) \neq {\rm sgn} (B)$, then ${\rm sgn} (b_{1}).{\rm sgn} (a_{i_1}) < 0$, and also, $|b_1| < |a_{i_1}|$. Therefore, we obtain an alternating subset $\{b_1,a_{i_1} , a_{i_2} , \ldots , a_{i_l} \}\subseteq A$ which is larger than ${\rm Alt}(A)$, which is impossible.
Consequently, $\lambda (A) = \lambda (B)$.

\noindent {\textbf{Part 2:}}
If $|\lambda (A)|=|\lambda (B)| > \alpha$, then we are in the second case and if $\lambda(A)=-\lambda(B)$, and say $\lambda(A)>0$, we have two subsets $F\subseteq A^{+}$ and $F'\subseteq B^{-}$ with the same color $c(F)=c(F')$, while $F\cap F'\subseteq A^{+}\cap B^{-}=\emptyset$, which is a contradiction to the properness of the coloring $c$.

So the claim and the theorem are proved for the case when $r=2$.
\end{proof}

Now we prove the following lemma, which will finish the proof of the Theorem \ref{thm1} by a simple induction on the exponent of $2$.
\begin{lemma}\label{lem}
 If Conjecture \ref{conj} is true for $(r,s)=(r_1,r_1)$ and for $(r,s)=(r_2,s_2)$, then it will be true for $(r,s)=(r_1r_2,r_1s_2)$.
 \end{lemma}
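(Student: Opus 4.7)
The plan is to convert a proper $t$-coloring of $\mbox{KG}^{r_1 r_2}({\cal F}_{r_1 s_2})$ into a proper $t$-coloring of a smaller-arity Kneser hypergraph on an auxiliary family, apply the hypothesis for $(r_1,r_1)$ there, and finally invoke the hypothesis for $(r_2,s_2)$ locally to convert the resulting bound on colorability defect back into a bound on $\mbox{ecd}^{r_1 s_2}({\cal F})$.

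Let $t=\chi(\mbox{KG}^{r_1 r_2}({\cal F}_{r_1 s_2}))$ and fix a proper coloring $c:{\cal F}_{r_1 s_2}\to[t]$. I would define a family ${\cal G}$ on $[n]$ by declaring $G\in{\cal G}$ iff $G=A_1\sqcup\cdots\sqcup A_{r_2}$ with $A_1,\dots,A_{r_2}\in{\cal F}_{r_1 s_2}$ pairwise disjoint and $c(A_1)=\cdots=c(A_{r_2})$, and choose for each such $G$ a distinguished decomposition so that $\widehat c(G):=c(A_1)$ is well-defined. The coloring $\widehat c$ is then proper for $\mbox{KG}^{r_1}({\cal G}_{r_1})$, since any $r_1$ pairwise disjoint monochromatic $G_i\in{\cal G}_{r_1}$ refine to $r_1 r_2$ pairwise disjoint monochromatic almost $r_1 s_2$-stable elements of ${\cal F}$, contradicting the properness of $c$. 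The hypothesis for $(r_1,r_1)$ applied to ${\cal G}$ then yields $(r_1-1)t\ge\mbox{ecd}^{r_1}({\cal G})$. Taking an optimal equitable partition $[n]\setminus X_0=X_1\cup\cdots\cup X_{r_1}$ witnessing $\mbox{ecd}^{r_1}({\cal G})$, the restriction of $c$ to almost $r_1 s_2$-stable elements of ${\cal F}$ inside each $X_i$ is proper for the associated $r_2$-uniform Kneser hypergraph. Applying the hypothesis for $(r_2,s_2)$ to the family ${\cal F}$ restricted to $X_i$ produces an equitable $s_2$-partition of $X_i$ avoiding ${\cal F}$ with exceptional set $R_i$ of size at most $(r_2-1)t$; stitching these together yields an equitable $r_1 s_2$-partition of $[n]\setminus(X_0\cup\bigcup_i R_i)$ avoiding ${\cal F}$, and
\[
\mbox{ecd}^{r_1 s_2}({\cal F})\ \le\ |X_0|+\sum_{i=1}^{r_1}|R_i|\ \le\ (r_1-1)t+r_1(r_2-1)t\ =\ (r_1 r_2-1)t,
\]
which is the desired inequality.

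The main obstacle I anticipate is the final bookkeeping. The hypothesis for $(r_2,s_2)$ is stated in terms of almost $s_2$-stability with respect to the ambient ground-set order, and inside each part $X_i$ this notion need not coincide with the inherited notion of almost $r_1 s_2$-stability with respect to $[n]$, so one must either re-derive the hypothesis in this modified setting or exhibit an order-preserving identification that makes the two notions compatible. Moreover, one has to verify that the local $s_2$-refinements of the $X_i$ can be reassembled into a globally equitable $r_1 s_2$-partition and that the sizes of the $R_i$ really add to at most $r_1(r_2-1)t$. These are the subtle points where the interaction of the two hypotheses must be handled carefully, but the overall strategy is clean and the arithmetic $(r_1-1)+r_1(r_2-1)=r_1 r_2-1$ reproduces exactly the denominator in the conjecture.
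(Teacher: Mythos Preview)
Your outline has the right overall shape and the arithmetic $(r_1-1)+r_1(r_2-1)=r_1r_2-1$ is exactly what is needed, but the obstacle you flag at the end is not bookkeeping: it is a genuine gap that breaks the argument as written.

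After your equitable partition $[n]\setminus X_0=X_1\cup\dots\cup X_{r_1}$ witnessing $\mbox{ecd}^{r_1}({\cal G})$, what you know is that no $X_i$ contains $r_2$ pairwise disjoint $c$-monochromatic members of ${\cal F}_{r_1s_2}$; equivalently, $c$ restricted to ${\cal F}_{r_1s_2}\cap 2^{X_i}$ is a proper $t$-coloring of its $r_2$-uniform Kneser hypergraph. To invoke the $(r_2,s_2)$ hypothesis on ${\cal F}|_{X_i}$, however, you need a proper $t$-coloring of $\mbox{KG}^{r_2}\big(({\cal F}|_{X_i})_{s_2}\big)$, where $s_2$-stability is measured in the \emph{relative} order on $X_i$. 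Since $X_i$ is merely a block of an equitable partition and carries no stability of its own, there is no containment $({\cal F}|_{X_i})_{s_2}\subseteq{\cal F}_{r_1s_2}\cap 2^{X_i}$, and $c$ is simply undefined on the vertices you need. No order-preserving identification repairs this: the needed containment is precisely the statement that $X_i$ is almost $r_1$-stable, which it is not.

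The paper's proof reverses the order in which the two hypotheses are used, and this reversal is essential rather than cosmetic. The auxiliary family is taken to be ${\cal F}'=\{X\subseteq[n]:\mbox{ecd}^{s_2}({\cal F}|_X)>(r_2-1)t\}$, depending only on ${\cal F}$ and not on $c$. The $(r_2,s_2)$ hypothesis is then applied to define the coloring on ${\cal F}'_{r_1}$: for an almost $r_1$-stable $X\in{\cal F}'$ the translation $({\cal F}|_X)_{s_2}\subseteq{\cal F}_{r_1s_2}$ \emph{is} valid (this is exactly where the $r_1$-stability is spent), so $c$ restricts there and, by the hypothesis, cannot be proper for $\mbox{KG}^{r_2}$, yielding the desired monochromatic $r_2$-tuple and hence a color for $X$. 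After applying the $(r_1,r_1)$ hypothesis to ${\cal F}'$, the resulting equitable partition avoids ${\cal F}'$, so each block $X_i$ satisfies $\mbox{ecd}^{s_2}({\cal F}|_{X_i})\le(r_2-1)t$ \emph{by definition of} ${\cal F}'$, with no second appeal to the conjecture. The stitching and equitability step is then exactly as you sketched.
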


\begin{proof} Let $r=r_1r_2$ and $s=r_1s_2$. Let $t=\chi(\mbox{KG}^r({\cal F}_s))$ and $c:{\cal F}_s\rightarrow \{1,\dots, t\}$ be a proper coloring. Define
$${\cal F}'=\{X\subseteq [n]| \mbox{ecd}^{s_2}({\cal F}|_X)>(r_2-1)t\}.$$
 Let $X\in {\cal F}'_{r_1}$, then by identifying $X$ with $\{1,2,\dots, |X|\}$ by the unique order preserving bijection, any element of $({\cal F}|_X)_{s_2}$ will be in ${\cal F}_s$ and hence gets a color in $\{1,\dots, t\}$ via $c$. Since by the assumption of the lemma

$$\chi(\mbox{KG}^{r_2}(({\cal F}|_X)_{s_2}))>t$$
therefore, one can find pairwise disjoint subsets $B_1(X),\dots, B_{r_2}(X)$ in ${\cal F}_s|_X$ with the same color, we assign that color to $X$ and hence get a coloring $c':{\cal F}'_{r_1}\rightarrow \{1,\dots, t\}$. We claim that $c'$ is a proper coloring of $\mbox{KG}^{r_1}({\cal F}'_{r_1})$ and hence by the hypothesis of the lemma
$$\mbox{ecd}^{r_1}({\cal F}')\le (r_1-1)t.$$
To prove the claim, assume for the sake of contradiction that we have found pairwise disjoint subsets $A_1,\dots, A_{r_1}$ in ${\cal F}'_{r_1}$ with the same color $c'(A_1)=\dots=c'(A_{r_1})$, then it follows that we have $r=r_1r_2$ pairwise disjoint subsets $B_i(X_j)$ for $i=1,\dots, r_2$ and $j=1,\dots, r_1$ in ${\cal F}_{s}$ of the same color by the coloring $c$. This contradicts the properness of $c$. Now, we may find $X_0\subseteq [n]$ of size at most $(r_1-1)t$ and an equitable partition
$$[n]\backslash X_0=X_1\cup \dots \cup X_{r_1}$$
such that no $X\in {\cal F}'$ is a subset of one of $X_i$'s for $i=1,\dots, r_1$. In particular $X_i\not\in {\cal F}'$. So $\mbox{ecd}^{s_2}({\cal F}|_{X_i})\le (r_2-1)t$. So one may find $X_{i,0}\subseteq X_i$ of size at most $(r_2-1)t$ and an equitable partition,
$$X_i\backslash X_{i,0}=X_{i,1}\cup \dots \cup X_{i,s_2}$$
such that no $F\in {\cal F}|_{X_i}$ is a subset of one of $X_{i,1},\dots, X_{i, s_2}$. Note that we may assume without loss of generality that $|X_{i,0}|=(r_2-1)t$, by removing elements from $X_{i,j}$'s and adding them to $X_{i,0}$ without violating the equitability condition. This implies that $X_{i,j}$ for $i=1,\dots, r_1$ and $j=1,\dots, s_2$ form an equitable partition of $[n]\backslash X_0'$ with $X_0'=X_0\cup X_{1,0}\cup\dots \cup X_{r_1,0}$ with at most $(r_1-1)t+r_1(r_2-1)t=(r-1)t$ elements. This partition has the property that there is no $F\in {\cal F}$ such that it is a subset of one of $X_{i,j}$ for $1\le i\le r_1$ and $1\le j\le s_2$. So, $\mbox{ecd}^s({\cal F})\le (r-1)t$ and the lemma is proved.
\end{proof}

\section{Proof of Theorem \ref{thm2}}

This is also by modifying  a proof of Chen given in \cite{C2}. It uses the $\mathbb Z_p$-Tucker lemma whose statement is given in \cite{M}. For $n\ge 1$ and a prime number $p$, we let 
$sd(E_{n-1}(\mathbb Z_p))$ denote all non-empty subsets of $\mathbb Z_p\times [n]$, such that no two distinct elements of it, have the same second component.

\begin{proof}
Let $t=\chi(\mbox{KG}^p({\cal F}_s))$ and $c$ be a proper coloring of its vertices with $\{1,\dots, t\}$.
 We construct a $\mathbb Z_p$-equivariant map 
$$\lambda: sd(E_{n-1}(\mathbb Z_p))\longrightarrow \mathbb Z_p\times [m]$$
$$\lambda(A)=(\lambda_1(A),\lambda_2(A))$$
where $m=t+\alpha_1+\alpha_2$ and satisfies the properties of the $\mathbb Z_p$-Tucker lemma with parameters $\alpha=\alpha_1+\alpha_2$ and $m$, i.e.
\begin{enumerate}
\item If $A_1\subseteq A_2$ and $\lambda_2(A_1)=\lambda_2(A_2)\le \alpha$ then $\lambda_1(A_1)=\lambda_1(A_2)$.
\item If $A_1\subseteq \dots \subseteq A_p$ and $\lambda_2(A_1)=\dots=\lambda_2(A_p)>\alpha$ then $\lambda_1(A_1),\dots, \lambda_1(A_p)$ are not pairwise distinct.
\end{enumerate}
 and hence
$$\alpha+(p-1)(m-\alpha)\ge n.$$
Therefore
$$t\ge \frac{n-\alpha}{p-1}$$
and the theorem is proved. Let $A$ be a non-empty face and $B\subset A$ be such that for each $i\in \mathbb Z_p$, $B^i=\{1\le j\le n| (i,j)\in B\}$ is an almost $s$-stable subset and $\pi_2(B)\subseteq [n]$ is maximum with respect to an arbitrary complete ordering on subsets of $[n]$, with the property that if $|A|<|B|$ then $A<B$. Here $\pi_2$ is the projection onto the second component.  The construction of $\lambda$ is given in three cases. 
\\
{\textbf{Case 1:}}  If there is an $F\in{\cal F}$ with $F\subseteq B^i$ for some $i\in {\mathbb Z}_p$, then choose the smallest such $F$ and define
$$\lambda(A)=(i, c(F)+\alpha).$$
\\
{\textbf{Case 2:}} If $|B^{i_1}|=\dots=|B^{i_p}|$, and $1\le j\le s-1$ is so that the smallest element of $\pi_2(A)$ is congruent to $j$ modulo $s-1$, then define
$$\lambda(A)=(sgn(A), (s-1)(|B^{i_1}|-1)+j).$$
Note that since we are not in the case one,  $p|B^{i_1}|\le n-\mbox{ecd}^p({\cal F})$ and hence $$(s-1)(|B^{i_1}|-1)+j\le \alpha_1.$$
\\
{\textbf{Case 3:}} Otherwise, If $|B^{i_1}|=\dots=|B^{i_h}|<|B^{i_{h+1}}|\le \dots\le |B^{i_p}|$ with some $1\le h<p$, then choose $1\le h'<p$ such that $hh'\equiv 1 \mod p$ and define
$$\lambda(A)=((i_1\dots i_h)^{h'}, (p-1)|B^{i_1}|+p-h+\alpha_1).$$
Note that since we are not in the case one, if we remove elements from $B^{i_{h+1}},\dots, B^{i_p}$ so that their size become $|B^{i_1}|+1$, then we have an equitable disjoint collection of sets without any element of $\cal F$ in them, so $p|B^{i_1}|+p-h\le n-\mbox{ecd}^p({\cal F})$ and hence it follows that
$$(p-1)|B^{i_1}|+p-h\le \alpha_2.$$

It remains to check the conditions of $\mathbb Z_p$-Tucker lemma. The equivariance is only non-obvious for the case 3, it follows from the fact that 
$$((\omega\cdot i_1)\dots (\omega\cdot i_h))^{h'}=\omega^{hh'}\cdot (i_1\dots i_h)^{h'}=\omega\cdot  (i_1\dots i_h)^{h'}.$$
If $A_1\subseteq \dots\subseteq A_p$ and $\lambda_2(A_1)=\dots=\lambda_2(A_p)>\alpha$, then we are in the first case and we have elements $F_i\in{\cal F}$ such that $F_i\subseteq B_i^{\lambda_1(A_i)}$ and have the same color $c(F_1)=\dots=c(F_p)$. Hence if $\lambda_1(A_1),\dots, \lambda_1(A_p)$ are pairwise distinct then $F_1,\dots, F_p$ are pairwise disjoint. This contradicts the properness of $c$.
\\
If $A_1\subseteq A_2$ and $\lambda_2(A_1)=\lambda_2(A_2)\le \alpha_1$, then we are in the second case, hence $|B_1^i|=|B_2^i|$ for $i\in \mathbb Z_p$. So if $sgn(A_1)\ne sgn(A_2)$, then since the first elements of $\pi_2(A_1)$ and $\pi_2(A_2)$ are congruent modulo $s-1$, by adding the first element of $A_2$, to $B_2$ we get subset $B_3$, where $B_3^i$ is almost $s$-stable for all $i$ and $B_3^{sgn(A_1)}$ is bigger than $B_1^{sgn(A_1)}$. This contradicts the maximality of $B_1$.
\\
If $A_1\subseteq A_2$ and $\alpha_1<\lambda_2(A_1)=\lambda_2(A_2)\le \alpha$, then we are in the third case. Hence if
$$|B_1^{i_1}|=\dots=|B_1^{i_h}|<|B_1^{i_{h+1}}|\le \dots\le |B_1^{i_p}|$$
$$|B_2^{j_1}|=\dots=|B_2^{j_k}|<|B_2^{j_{k+1}}|\le \dots\le |B_2^{j_p}|$$
then $|B_1^{i_1}|=|B_2^{j_1}|$ and $h=k$ and therefore $\{i_1,\dots, i_h\}=\{j_1,\dots, j_k\}$. It follows that $\lambda_1(A_1)=\lambda_1(A_2)$. 
\\
All of the conditions are checked and hence the theorem is proved.
\end{proof}

\section{Conclusion}

Let $n\ge 1$ and $s\ge 2$, $r\ge 2$ be integers. A subset $A$ of $[n]$ is $s$-stable if $s\le |i-j|\le n-s$ for all distinct $i,j\in A$. For a family $\cal F$ of subsets in $[n]$, the chromatic number of
$\mbox{KG}^r({\cal F}_{s\tiny{\mbox{-stab}}})$ for the family ${\cal F}_{s\tiny{\mbox{-stab}}}$ of the $s$-stable elements in $\cal F$ has also been studied in the literature. Similar to conjecture \ref{conj}, one might be tempted to make the following conjecture.
But it has counter examples even for the case $r=s=2$, see \cite{F}.

\begin{conjecture}\label{conj2}
With the above notation for $s\ge r$, one has
\[\chi(\mbox{KG}^r({\cal F}_{s\tiny{\mbox{-stab}}}))\ge \left\lceil \frac{\mbox{ecd}^s({\cal F})}{r-1}\right\rceil.\]
\end{conjecture}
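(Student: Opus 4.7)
The plan is to adapt the Tucker-lemma argument from the $r=2$ case of Theorem \ref{thm1} to the strictly $s$-stable setting. Fix a proper coloring $c$ of $\mbox{KG}^2({\cal F}_{s\mbox{-stab}})$ with $t$ colors and set $\alpha=n-\mbox{ecd}^s({\cal F})$, $m=\alpha+t$. On $sd(E_{n-1}(\mathbb Z_2))$ one tries to build a $\mathbb Z_2$-equivariant labeling $\lambda$ into $\{\pm 1,\ldots,\pm m\}$ exactly as in Theorem \ref{thm1}: when $|\mbox{Alt}(A)|\le\alpha$, set $\lambda(A)=\mbox{sgn}(A)\cdot|\mbox{Alt}(A)|$; when $|\mbox{Alt}(A)|>\alpha$, use the equitable $s$-colorability defect to extract some $F\in{\cal F}$ contained in a residue class of $\mbox{Alt}(A)$ and set $\lambda(A)=\pm(c(F)+\alpha)$. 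If the witness $F$ produced in the second case could always be guaranteed to be $s$-stable in the cyclic sense, the Tucker lemma would yield $m\ge n$ and hence the desired bound. The passage from $r=2$ to higher $r$ (at least to powers of $2$) would then be handled by the doubling argument of Lemma \ref{lem}, with the obvious cyclic adjustment to the restriction step $\mbox{ecd}^{s_2}({\cal F}|_X)$.

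The crucial point of departure from the proof of Theorem \ref{thm1} is that the residue classes $X_i=\{|a_{i_j}|:j\equiv i\pmod s\}$ produced in Case 2 are almost $s$-stable as subsets of $[n]$ but need not satisfy the cyclic gap condition $|i-j|\le n-s$. To enforce this extra constraint, the natural move is to replace the linear order on $[n]$ by a cyclic order and to work on the cyclic analogue of $sd(E_{n-1}(\mathbb Z_2))$, in the spirit of Schrijver's refinement and its use by Meunier in \cite{M}. One would redefine $\mbox{Alt}(A)$ so that consecutive elements alternate in sign around the cycle, and then partition its support into $s$ cyclic residue classes, each of which inherits the cyclic gap bound automatically from the way the indices are distributed around $\mathbb Z/n$.

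The hard part, and the reason this conjecture fails in general as already observed above for $r=s=2$, is precisely the interaction between the cyclic ordering and the $\mathbb Z_2$-equivariance of $\lambda$. There is no canonical $\mathbb Z_2$-equivariant way to break the circle at a basepoint in order to define the cyclic residue classes; wherever one breaks it, the extracted $F$ may straddle the break and fail to be $s$-stable at exactly one pair of positions, and local repair destroys the equivariance that drives Tucker's lemma. This is the same mechanism producing the counterexamples of \cite{F}, so any proof strategy of this flavour must impose extra hypotheses on $\cal F$ -- for instance invariance under cyclic shifts of $[n]$, which would allow averaging over basepoints to restore equivariance, or a quantitative hypothesis relating $n$, $r$, $s$, and $\mbox{ecd}^s({\cal F})$ strong enough to exclude the small bad configurations. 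I expect that identifying the right such weakening, rather than a direct proof of Conjecture \ref{conj2} as stated, is the realistic way forward.
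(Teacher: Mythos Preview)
The statement you are addressing is Conjecture~\ref{conj2}, and the paper does \emph{not} prove it; on the contrary, immediately after stating it the paper remarks that it already admits counterexamples for $r=s=2$ (with a reference to \cite{F}). So there is no ``paper's own proof'' to compare your proposal against.

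Your write-up is not really a proof either, and you seem aware of this: you outline the obvious adaptation of the $r=2$ argument from Theorem~\ref{thm1}, correctly isolate the failure point (the residue classes $X_i$ extracted from $\mbox{Alt}(A)$ are only almost $s$-stable, not cyclically $s$-stable), sketch the natural cyclic fix, and then explain why that fix collides with $\mathbb Z_2$-equivariance. That diagnosis is sensible and is consistent with the existence of the counterexamples from \cite{F} that the paper cites. But it is a discussion of obstructions, not a proof, and since the conjecture is false as stated no proof is possible without additional hypotheses---exactly as you conclude in your final paragraph.

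In short: the paper offers no proof of Conjecture~\ref{conj2}, your proposal does not claim one either, and your identification of where the Tucker-lemma strategy breaks down is accurate.
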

However it is true for $r=s=2$ , for the family of all $k$-subsets of $[n]$, which is just the statement of the famous Schrijver's theorem \cite{S}. Also in \cite{C1}, Chen proves the Conjecture \ref{conj2} for the family of all $k$-subsets of $[n]$, $r=2$ and $s$ an even integer. 
\\
{\textbf{Acknowledgement.}} The author wishes to thank Hamidreza Daneshpajouh and Saeed Shaebani, for stimulating discussions and sharing their ideas.

\end{document}